\documentclass[12pt]{article}
\usepackage{}

\usepackage{epsfig}
\usepackage{latexsym}
\usepackage{caption}
\usepackage{amsfonts}
\usepackage{amssymb}
\usepackage{mathrsfs}
\usepackage{amsmath}
\usepackage{enumerate}
\usepackage{graphics}
\usepackage{float}
\usepackage{pict2e}

\usepackage{color}

\setlength{\textheight}{8.5in} \setlength{\textwidth}{6.2in}
\setlength{\oddsidemargin}{0in} \setlength{\parindent}{1em}

\makeatletter

\newcommand{\Rmnum}[1]{\expandafter\@slowromancap\romannumeral #1@}

\makeatother

\begin{document}

\newtheorem{theorem}{Theorem}
\newtheorem{observation}[theorem]{Observation}
\newtheorem{corollary}[theorem]{Corollary}
\newtheorem{algorithm}[theorem]{Algorithm}
\newtheorem{definition}[theorem]{Definition}
\newtheorem{guess}[theorem]{Conjecture}
\newtheorem{claim}[theorem]{Claim}
\newtheorem{problem}[theorem]{Problem}
\newtheorem{question}[theorem]{Question}
\newtheorem{lemma}[theorem]{Lemma}
\newtheorem{proposition}[theorem]{Proposition}
\newtheorem{fact}[theorem]{Fact}

\makeatletter
  \newcommand\figcaption{\def\@captype{figure}\caption}
  \newcommand\tabcaption{\def\@captype{table}\caption}
\makeatother

\newtheorem{acknowledgement}[theorem]{Acknowledgement}

\newtheorem{axiom}[theorem]{Axiom}
\newtheorem{case}[theorem]{Case}
\newtheorem{conclusion}[theorem]{Conclusion}
\newtheorem{condition}[theorem]{Condition}
\newtheorem{conjecture}[theorem]{Conjecture}
\newtheorem{criterion}[theorem]{Criterion}
\newtheorem{example}[theorem]{Example}
\newtheorem{exercise}[theorem]{Exercise}
\newtheorem{notation}{Notation}
\newtheorem{solution}[theorem]{Solution}
\newtheorem{summary}[theorem]{Summary}

\newenvironment{proof}{\noindent {\bf
Proof.}}{\rule{3mm}{3mm}\par\medskip}
\newcommand{\remark}{\medskip\par\noindent {\bf Remark.~~}}
\newcommand{\pp}{{\it p.}}
\newcommand{\de}{\em}
\newcommand{\mad}{\rm mad}
\newcommand{\qf}{Q({\cal F},s)}
\newcommand{\qff}{Q({\cal F}',s)}
\newcommand{\qfff}{Q({\cal F}'',s)}
\newcommand{\f}{{\cal F}}
\newcommand{\ff}{{\cal F}'}
\newcommand{\fff}{{\cal F}''}
\newcommand{\fs}{{\cal F},s}
\newcommand{\s}{\mathcal{S}}
\newcommand{\G}{\Gamma}
\newcommand{\g}{\gamma}
\newcommand{\wrt}{with respect to }
\newcommand {\nk}{ Nim$_{\rm{k}} $  }

\newcommand{\q}{\uppercase\expandafter{\romannumeral1}}
\newcommand{\qq}{\uppercase\expandafter{\romannumeral2}}
\newcommand{\qqq}{\uppercase\expandafter{\romannumeral3}}
\newcommand{\qqqq}{\uppercase\expandafter{\romannumeral4}}
\newcommand{\qqqqq}{\uppercase\expandafter{\romannumeral5}}
\newcommand{\qqqqqq}{\uppercase\expandafter{\romannumeral6}}

\newcommand{\qed}{\hfill\rule{0.5em}{0.809em}}

\newcommand{\var}{\vartriangle}

\title{{\large \bf Multiple list colouring of  planar graphs}}

\author{   Xuding Zhu\thanks{Department of Mathematics, Zhejiang Normal University,  China.  E-mail: xudingzhu@gmail.com. Grant Number: NSFC 11571319.},}

\date{}
\maketitle

\begin{abstract}
This paper proves that for each positive integer $m$, there is a planar graph $G$ which is not $(4m+\lfloor \frac{2m-1}{9}\rfloor,m)$-choosable. Then we pose some conjectures concerning multiple list colouring of planar graphs.  
\end{abstract}

\vspace{3mm}\textbf{Keywords:} fractional choice number,  multiple list colouring,  planar graph.

\section{Introduction}

A $b$-fold colouring of a graph $G$ is a mapping $\phi$ which assigns to each vertex $v$ of $G$ a set $\phi(v)$ of $b$ colours, so that adjacent vertices receive disjoint colour sets. An $(a,b)$-colouring of $G$ is a $b$-fold colouring $\phi$ of $G$ such that $\phi(v) \subseteq \{1,2,\ldots, a\}$ for each vertex $v$. The {\em fractional chromatic number} of $G$ is
 $$\chi_f(G)= \inf\{\frac ab: G \ \text{ is $(a,b)$-colourable } \}.$$
An $a$-list assignment of $G$ is a mapping $L$ which assigns to each vertex $v$ a set $L(v)$ of $a$ permissible colours. A $b$-fold $L$-colouring of $G$ is a $b$-fold colouring $\phi$ of $G$ such that $\phi(v) \subseteq L(v)$ for each vertex $v$. We say $G$ is $(a,b)$-choosable if for any $a$-list assignment $L$ of $G$, there is a $b$-fold $L$-colouring of $G$. The {\em fractional choice number} of $G$ is $$ch_f(G)= \inf\{\frac ab:  G \ \text{is $(a,b)$-choosable }\}.$$ It was proved by Alon, Tuza and Voigt \cite{ATV1995} that for any finite graph $G$, $\chi_f(G)=ch_f(G)$ and moreover the infimum in the definition of $ch_f(G)$ is attained and hence can be replaced by minimum. This implies that if $G$ is $(a,b)$-colourable, then for some integer $m$, $G$ is $(am,bm)$-choosable. As every planar graph is $4$-colourable (which is equivalent to $(4,1)$-colourable), we know for each planar graph $G$, there is an integer $m$ such that $G$ is $(4m,m)$-choosable. However, the integer $m$ depends on $G$. 
We prove in this paper that there is no integer $m$ so that every planar graph is $(4m,m)$-choosable.   

\begin{theorem}
	\label{main}
	For each positive integer $m$, there is a planar graph $G$ which is not $(4m+\lfloor \frac{2m-1}{9}\rfloor,m)$-choosable.
\end{theorem}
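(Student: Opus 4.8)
The statement is equivalent to producing, for each $m$, a planar graph $G$ whose $m$-fold list chromatic number exceeds $4m + \lfloor \frac{2m-1}{9}\rfloor$; that is, a planar $G$ together with an $a$-list assignment $L$, with $a = 4m + k$ and $k = \lfloor \frac{2m-1}{9}\rfloor$, admitting no $m$-fold $L$-colouring. A first structural observation guides the whole plan: for any \emph{fixed} planar graph $G$, the quantity $\frac1m$ times the least $a$ for which $G$ is $(a,m)$-choosable tends to $ch_f(G) = \chi_f(G) \le 4$ as $m \to \infty$, so a single graph cannot work for all $m$. The graph $G = G_m$ must therefore be allowed to grow with $m$, and the construction has to squeeze a surplus of $k$ colours out of list overlaps that become available only in large instances. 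The base case $m=1$ is exactly Voigt's theorem that some planar graph is not $4$-choosable, so I would design a family that degenerates to such an example.

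The engine of the construction is a planar \emph{forcing gadget}: a plane graph with a distinguished vertex $v$ (or face) carrying a partial list assignment, such that in every $m$-fold $L$-colouring the set $\phi(v)$ is forced to avoid a prescribed block $B \subseteq L(v)$. No low-degree attachment can force anything, since each vertex leaves $a - m = 3m + k$ free colours; the forcing must instead come from embedding $v$ in a densely-listed cluster that ``consumes'' the colours of $B$ among the neighbours of $v$. Because planar cliques are capped at $K_4$ and $4m \le a$, a single cluster cannot exhaust a block by itself, so I would build the gadget recursively, each layer amplifying how much of $B$ is consumed; the efficiency of this recursion --- how much of the colour budget a layer costs versus how much of $B$ it forbids --- is where the numerical constant enters.

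With the gadget in hand, I would attach several gadgets, forbidding pairwise-disjoint blocks $B_1, \dots, B_r \subseteq L(v)$, into the faces around a central vertex (or central triangle), which preserves planarity since each gadget is itself planar and meets the rest of $G$ only along $v$. If $|B_1| + \cdots + |B_r| > a - m$, then no $m$-subset $\phi(v) \subseteq L(v)$ can avoid all the $B_i$, and $G$ has no $m$-fold $L$-colouring. The whole theorem then reduces to an optimisation over integer gadget parameters: maximise the total forbidden fraction of $L(v)$ subject to planarity, the list-size bound $a = 4m+k$, and disjointness of the $B_i$. Carrying out this optimisation --- and verifying that its extremal value forces $k = \lfloor \frac{2m-1}{9}\rfloor$, with the $\frac{2}{9}$ arising as the best attainable ratio --- is the crux.

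The main obstacle is proving the gadget is \emph{tight}: I must show not only that it forbids the intended block $B$, but also that whenever $\phi(v)$ does avoid $B$ a valid colouring of the gadget genuinely exists, so that the only obstruction is the one I want. This demands exact colour bookkeeping, since the theorem turns on a one-colour deficiency, and it demands checking that gluing gadgets together creates no unintended shared colours or extra slack that would let a colouring escape. I expect the recursive tightness analysis, together with pinning down the extremal $2/9$ ratio, to be the most delicate and technical part of the argument.
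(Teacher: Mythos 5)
Your proposal stops exactly where the proof has to begin: it is a strategy outline whose two deferred items --- the construction of the forcing gadget and the ``optimisation'' that is supposed to yield the constant $\frac29$ --- are the entire mathematical content of the theorem. You correctly identify the paradigm the paper also uses (a hub whose every possible $m$-fold colouring is defeated by its own planar gadget, glued in so that planarity is preserved), but you give no gadget, no list assignment, and no inequality; the assertion that an optimisation over unspecified gadget parameters would produce $k=\lfloor \frac{2m-1}{9}\rfloor$ ``as the best attainable ratio'' is speculation. For contrast, here is what fills that hole in the paper: a concrete plane graph on $18$ vertices with two adjacent hub vertices $u,u'$, and, for each choice of disjoint $m$-sets $A=\phi(u)$, $B=\phi(u')$, a list assignment built from further disjoint blocks $C,D$ of size $2m+k$ and disjoint $m$-subsets $X,X'\subseteq C$. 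Two symmetry/pigeonhole steps (between the triangles $(u,v,w)$ and $(u',v,w)$, then between $(u,v,t)$ and $(u,w,t)$) force $|X\cap S|\le \frac k2$ and $|X\cap \phi(v)|\ge \frac m2 - \frac k4$, where $S=C\setminus(\phi(v)\cup\phi(w))$ has size $k$; after that, an inner triangle $a,b,c$ must be coloured from a set of size at most $(2m+k)+\left(\frac m2+\frac k4\right)+k=\frac{5m}{2}+\frac{9k}{4}<3m$, a contradiction. The theorem's constant is exactly the threshold $9k<2m$ of this last inequality; nothing in your plan produces this or any other explicit bound.

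Two further problems. First, your organisation --- gadgets at a single vertex $v$, each forcing $\phi(v)$ to avoid a block $B_i$, with disjoint blocks of total size exceeding $a-m$ --- differs from the paper's, which instead attaches one copy of the gadget for each of the $p={4m+k \choose m, m,2m+k}$ ordered pairs of disjoint $m$-subsets at the two adjacent hubs $u,u'$, each copy defeating exactly one pair. The two-hub structure is not incidental: the gadget's internal contradiction uses both colour classes $\phi(u)$ and $\phi(u')$ inside the inner lists, and you offer no evidence that single-vertex ``block-forcing'' gadgets of the efficiency you need exist when every list has $4m+k$ colours. Second, the obstacle you single out as the crux --- tightness, i.e.\ that the gadget is colourable whenever $\phi(v)$ avoids $B$ --- is not needed at all: a colouring of the whole graph restricts to a colouring of every gadget, so only the forbidding direction enters, and a gadget that over-forbids (or is even uncolourable outright) only strengthens non-colourability; likewise, gluing gadgets can only add constraints, never create ``extra slack''. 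The genuinely hard part is the one you left untouched: exhibiting any forcing at all when each vertex has $3m+k$ colours of slack. (A minor point: your opening claim that for fixed $G$ the optimal ratio tends to $\chi_f(G)$ as $m\to\infty$ does not follow from Alon--Tuza--Voigt, which yields one good pair $(a_0,b_0)$; whether choosability scales to multiples $(ta_0,tb_0)$ is itself nontrivial. This only affects your motivation, not the construction.)
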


The $m=1$ case of Theorem \ref{main} is equivalent to say that there are non-$4$-choosable planar graphs, which was  proved by   Voigt \cite{Voigt1993}. A smaller non-$4$-choosable planar graph was constructed \cite{Mir1996}, a $3$-colourable non-$4$-choosable planar graph was constructed by Gutner   \cite{Gut1996,VW1997} and it was prove in \cite{Gut1996} that it is NP-complete to decide if a given planar graph is $4$-choosable.


\section{The proof of Theorem \ref{main}}

In this section, $m$ is a fixed positive integer. Let $k= \lfloor \frac{2m-1}{9}\rfloor$. We shall construct a planar graph 
$G$ which is not $(4m+k,m)$-choosable.

\begin{figure}[hh!]
	\label{fig2}
	\centering
	\includegraphics[width=140mm]{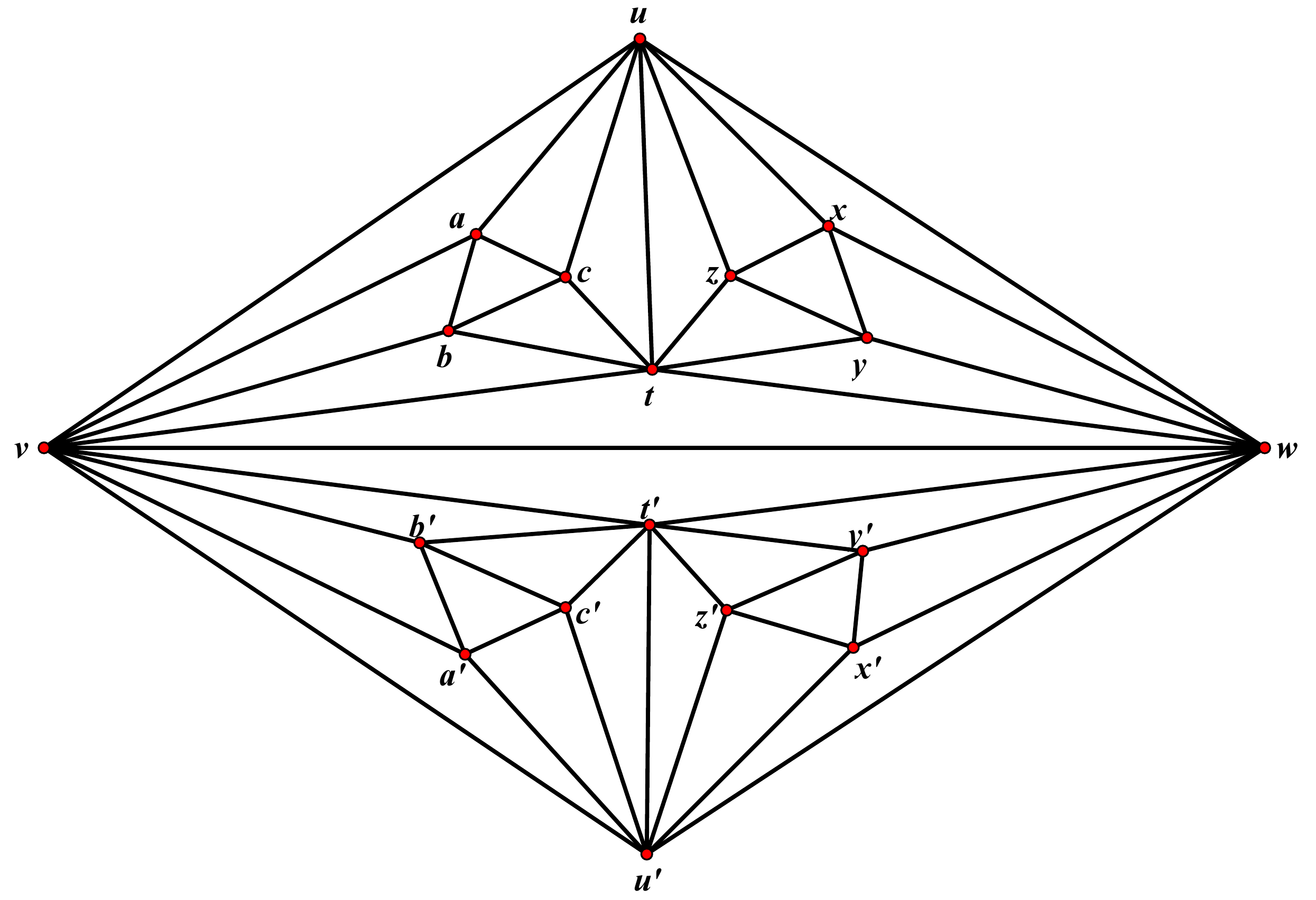}
	\label{fig1}
	\caption{The graph $G$}
\end{figure}

\begin{lemma}
	\label{lem1}
	Let $G$ be the graph as shown in Figure 1. There is a list assignment $L$
	of $G$ for which the following hold:
	\begin{enumerate}
		\item $|L(s)|=4m+k$ for each vertex $s$, except that  $|L(u)|=|L(u')|=m$.
		\item There is no $m$-fold $L$-colouring of $G$.
	\end{enumerate} 
\end{lemma}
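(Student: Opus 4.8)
The plan is to exhibit the list assignment explicitly and argue by contradiction, the whole argument hinging on the fact that a vertex whose list has size exactly $m$ has its colour set completely determined in any $m$-fold colouring. First I would fix disjoint colour sets $A$ and $B$ of size $m$ and set $L(u)=A$, $L(u')=B$. In any $m$-fold $L$-colouring $\phi$ we then necessarily have $\phi(u)=A$ and $\phi(u')=B$, since each of $u,u'$ must receive $m$ colours from a list of exactly $m$ colours. Thus the small lists turn $u$ and $u'$ into fixed ``sources'' that broadcast the forbidden sets $A$ and $B$ to their neighbours. The remaining vertices all receive lists of size $4m+k$ drawn from a global palette, which I would organise as $A\cup B$ together with a structured pool of free colours; the internal lists are chosen so that $A$ and $B$ become the levers that drive the contradiction.

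The core observation I would use repeatedly is that a vertex $w$ admits no valid colour unless its list contains at least $m$ colours outside $\bigcup_{v\sim w}\phi(v)$; equivalently, $w$ is \emph{blocked} as soon as its neighbours' colour sets cover all but fewer than $m$ colours of $L(w)$. Each gadget of $G$ is a small planar graph attached to $u$ and $u'$, engineered so that once $\phi(u)=A$ and $\phi(u')=B$ are imposed, every extension of $\phi$ into the gadget forces the colour sets on a designated family of output vertices into a prescribed short list of possibilities, each consuming free colours in a controlled way. I would establish this propagation property as a self-contained sub-claim about one gadget, purely by chasing the disjointness constraints $\phi(x)\cap\phi(y)=\emptyset$ across edges $xy$ together with the size bookkeeping above.

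The assembly and counting come last. Copies of the gadget are nested around the $u$–$u'$ axis so that $G$ stays planar (this is what Figure 1 records), and the gadgets are tuned so that the families of colourings they individually forbid jointly exhaust every way of distributing the free colours; a final critical vertex then has its entire list covered by its neighbours, contradicting colourability. The quantitative heart of the proof, and the step I expect to be the main obstacle, is balancing two competing demands: each gadget should forbid as large a family of colour patterns as possible, yet every internal list may have size only $4m+k$ and planarity limits how the gadgets can overlap. Optimising this trade-off is exactly what pins the admissible slack down to $k=\lfloor \tfrac{2m-1}{9}\rfloor$. I would set up the governing inequality at the critical vertex — counting the free colours its neighbours are forced to cover against the $4m+k$ it has available — and the threshold at which the cover becomes unavoidable should reproduce the $\tfrac{2m-1}{9}$ term. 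Verifying that the \emph{explicit} lists realise this optimum, rather than merely that some lists do, is the delicate part.
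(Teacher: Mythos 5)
Your proposal gets the easy step right and defers everything that actually constitutes the proof. The forced colouring $\phi(u)=A$, $\phi(u')=B$ from the size-$m$ lists is indeed how the paper starts, and "a vertex is blocked once its neighbours cover all but fewer than $m$ colours of its list" is indeed the mechanism of the final contradiction. But from there on, every load-bearing step is stated in the conditional: "I would establish this propagation property", "the threshold \ldots should reproduce the $\tfrac{2m-1}{9}$ term", "verifying that the explicit lists realise this optimum \ldots is the delicate part". The lemma asserts the \emph{existence} of a concrete list assignment with no $m$-fold colouring; a proof must exhibit the lists and derive the contradiction, and your text contains neither. The quantitative heart that you flag as "the main obstacle" is precisely what is missing: in the paper it is the chain $T=X-\phi(v)$ with $|T|\le \tfrac m2+\tfrac k4$, $R=B-\phi(t)$ with $|R|\le k$, and then, for the triangle $(a,b,c)$ whose colour sets are pairwise disjoint $m$-sets contained in $D\cup T$, $D\cup R$, $D\cup T\cup R$ respectively,
$$3m \le |D|+|T|+|R| \le (2m+k)+\Bigl(\frac m2+\frac k4\Bigr)+k = \frac{5m}{2}+\frac{9k}{4} < 3m,$$
which is where $k=\lfloor\frac{2m-1}{9}\rfloor$ enters. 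Nothing in your sketch produces these bounds or anything equivalent to them.

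Moreover, the architecture you sketch for the assembly does not match what the lemma needs. You describe many nested gadgets whose forbidden colouring families "jointly exhaust every way of distributing the free colours". That exhaustion idea is how Theorem 1 is derived \emph{from} the lemma (the graph $H$ built from $p={4m+k \choose m,\,m,\,2m+k}$ copies of $G$, one per colouring of $u,u'$); it is not how the lemma itself works. Inside Lemma \ref{lem1} there is exactly one possible colouring of $u,u'$ (you observed this yourself), so there is nothing to exhaust over; instead the paper handles the genuinely free choices (the disjoint $m$-subsets $\phi(v),\phi(w)$ of $C$, and $\phi(t)$) by an averaging/symmetry argument: since $|X\cap S|+|X'\cap S|\le|S|=k$ one of the two symmetric halves has $|X\cap S|\le k/2$, and a second symmetry between the triangles $(u,v,t)$ and $(u,w,t)$ gives $|X\cap\phi(v)|\ge\frac m2-\frac k4$. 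Exhausting all distributions of free colours by separate gadgets, with lists of size $4m+k$, is essentially the alternative huge construction (with $p={4m\choose m}^2$ and $k={2m\choose m}$ gadgets), and it is not known to tolerate the slack $k$ without the averaging idea. So the gap is twofold: the explicit lists and inequality are absent, and the replacement strategy you outline would not, as described, yield them.
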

\begin{proof}
	Let $A, B, C,D$ be pairwise disjoint sets of colours such that $|A|=|B|=m$ and $|C|=|D|=2m+k$. Let $X, X'$ be disjoint subsets of $C$ with $|X|=|X'|=m$. Define list assignment $L$ as follows:
	\begin{itemize}
		\item $L(u)=A$ and $L(u')=B$.
		\item $L(v)=L(w)=L(t)=L(t')=A \cup B \cup C$.
		\item $L(x)=L(a)=X \cup A \cup D$, and $L(x')=L(a')=X' \cup A \cup D$.
		\item $L(y)=L(b)=X \cup B \cup D$, and $L(y')=L(b')=X' \cup B \cup D$.
		\item $L(z)=L(c)= L(z')=L(c')=A \cup B \cup D$.
	\end{itemize}
	We shall show that there is no $m$-fold $L$-colouring of $G$.
	Assume to the contrary that $\phi$ is an $m$-fold $L$-colouring of $G$.
	Then $\phi(u)=A$ and $\phi(u')=B$ and $\phi(v), \phi(w)$ are two disjoint $m$-subsets of $C$.
	
	Let $S=G-(\phi(v) \cup \phi(w))$. We have $|S|=k$.
	As $X, X'$ are  disjoint subsets of $C$, we have
	$$|X \cap S|+|X' \cap S| \le |S|=k.$$
	By symmetry between the two triangles $(u,v,w)$ and $(u',v,w)$, we may assume that 
	$$|X \cap S| \le |X' \cap S|, \ \text{and hence} \ |X \cap S| \le k/2.$$
	(In case $|X' \cap S| \le \frac k2$, then we consider the subgraph contained in the triangle $(u',v,w)$).
	Thus
	$$|X \cap \phi(v)| +|X \cap \phi(w)|=|X \cap (\phi(v) \cup \phi(w))| \ge m- \frac k2.$$
	By symmetry between triangles $(u,v,t)$ and $(u,w,t)$, we may assume that 
	$$|X \cap \phi(v)| \ge |X \cap \phi(w)|, \ \text{and hence } \ |X \cap \phi(v)| \ge \frac m2 - \frac k4.$$
		(In case $|X \cap \phi(w)| \le \frac m2 - \frac k4$, then we consider the subgraph contained in the triangle $(u,w,t)$).
		
	Let $T=X-\phi(v)$. We have $$|T|=|X|-|X \cap \phi(v)| \le \frac m2 + \frac k4.$$
	Let $R=B - \phi(t)$. As $\phi(t)$ is disjoint from $\phi(u) \cup \phi(v) \phi(w)$,
	we know that $\phi(t) \subseteq B \cup S$. Hence
	$$|R|\le |S|= k.$$
By deleting the colours used by the neighbours of $a,b,c$, respectively, we have
	\begin{itemize}
		\item $\phi(a) \subseteq D \cup T$,
		\item $\phi(b) \subseteq D \cup R$,
		\item $\phi(c) \subseteq D \cup T \cup R$.
	\end{itemize}
	As $\phi(a),\phi(b),\phi(c)$ are pairwise disjoint, we have
	$$3m = |\phi(a) \cup \phi(b) \cup \phi(c)| \le |D|+|T|+|R| 
	\le (2m+k)+\left(\frac m2 + \frac k4\right) + k = \frac{5m}{2}+ \frac{9k}{4} < 3m,$$
	a contradiction.
\end{proof}
 
Let $p={4m+k \choose m, m,2m+k}$, and let $H$ be obtained from the disjoint union of $p$ copies of $G$ by identifying all the copies of $u$ into a single vertex (also named as $u$) and all the copies of $u'$ into a single vertex (also named as $u'$), and then add an edge connecting $u$ and $u'$. It is obvious that $H$ is a planar graph. 

Now we show that   $H$ is not $(4m+k,m)$-choosable.  Let $Z$ be a set of $4m+k$ colours.
Let $L(u)=L(u')=Z$. There are $p$ possible $m$-fold $L$-colourings of $u$ and $u'$. Each such a colouring $\phi$ corresponds to one copy of $G$. In that copy of $G$, define the list assignment as in the proof of Lemma \ref{lem1}, by replacing $A$ with $\phi(u)$ and $B$ with $\phi(u')$.  Now Lemma \ref{lem1} implies that no $m$-fold colouring of $u$ and $u'$ can be extended to an $m$-fold $L$-colouring of $H$. This completes the proof of Theorem \ref{main}.

\section{Some open problems}

Thomassen proved that every planar graph is $5$-choosable \cite{Tho1994}. The proof can be easily adopted to show that for any positive integer $m$, every planar graph is $(5m,m)$-choosable. Given a positive integer $m$, let $a(m)$ be the minimum integer such that every planar graph is $(a(m),m)$-choosable.
Combining Thomassen's result and Theorem \ref{main}, we have $$4m+\lfloor \frac{2m-1}{9}\rfloor+1 \le a(m) \le 5m.$$
 For $m=1$, the upper bound and the lower bound coincide. So $a(1)=5$. As $m$ becomes bigger, the gap between the upper and lower bounds increases.
 A natural question is what is the exact value of $a(m)$. We conjecture that the upper bound is not always tight.

\begin{conjecture}
	\label{conj1}
	There is a constant integer $m$ such that every planar graph is $(5m-1,m)$-choosable.
\end{conjecture}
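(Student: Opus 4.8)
The plan is to improve Thomassen's argument — which already gives uniform $(5m,m)$-choosability — by recovering a single colour of headroom for one suitably large value of $m$. There is room in principle: since $ch_f(G)=\chi_f(G)\le\chi(G)\le 4$ for every planar $G$, each individual graph is $(4m_G,m_G)$-choosable for some $m_G$. But Theorem \ref{main} shows no fixed $m$ achieves $(4m,m)$, so the entire game is played in the window between roughly $4.22m$ and $5m$, and the whole difficulty is uniformity in $G$. I would therefore not chase the optimal constant, but only try to turn the $\tfrac1m$ of fractional slack into an honest saving of one colour while keeping Thomassen's inductive scheme intact.

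The obstacle becomes visible once one records where that scheme is tight. In the $m$-fold version of Thomassen's induction (plane near-triangulation, outer cycle $v_1v_2\cdots v_p$, precoloured $m$-sets on $v_1v_2$, boundary lists of size $\ge 3m$, interior lists of size $\ge 5m$), deleting the boundary vertex $v_p$ adjacent to $v_1$ requires two \emph{disjoint} $m$-sets $C_1,C_2\subseteq L(v_p)\setminus\phi(v_1)$, uses $|L(v_p)|\ge 3m$, deletes $C_1\cup C_2$ from the interior neighbours (leaving $5m-2m=3m$), and finally colours $v_p$ from $C_1\cup C_2$ avoiding $\phi(v_{p-1})$ (leaving $2m-m=m$). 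Every one of these inequalities is tight. If the interior lists are only $5m-1$, a freshly promoted interior neighbour is left with $3m-1$, one short of the $3m$ the induction demands, and the scheme breaks already at the very first promotion. Hence no purely local modification can save the colour.

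My plan to get around this is to replace the rigid invariant by a relaxed one carrying a bounded total \emph{deficiency}: a few promoted vertices are permitted lists of size $3m-1$, and the deficit is repaid out of the genuine slack sitting on the outer boundary, where the uniform input lists have size $5m-1\gg 3m$. Concretely I would choose $C_1,C_2$ adaptively — or defer the choice — so that only a controlled number of deletion steps actually spend the extra colour, and then argue by a discharging or structural count that these tight configurations are sparse enough for the $\Theta(m)$ of boundary slack to absorb them once $m$ is large. The main obstacle is precisely this reconciliation: the deficiency is generated locally at essentially every step, whereas the compensating slack is a single global quantity, and making one $m$ work for \emph{all} planar graphs forces the total deficiency to be bounded independently of the graph. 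As a fallback I would combine the fractional solution guaranteed by $ch_f\le 4$ with a Thomassen-style rounding, paying the rounding error out of the $(5m-1)-4m=m-1$ spare colours — but bounding the rounding loss uniformly in $G$ looks, I expect, to be just as hard as bounding the deficiency directly.
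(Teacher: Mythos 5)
This statement is Conjecture \ref{conj1} of the paper: it is posed as an open problem, and the paper contains no proof of it whatsoever. The paper's contribution around this statement is only the bracketing $4m+\lfloor\frac{2m-1}{9}\rfloor+1 \le a(m) \le 5m$ (Theorem \ref{main} plus the $m$-fold adaptation of Thomassen), together with the observation that the upper and lower bounds coincide only at $m=1$. So there is nothing in the paper your attempt could match or diverge from; any review must judge your proposal purely as a candidate proof of an open conjecture.

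Judged that way, it is not a proof but a research programme, and you say so yourself. You correctly identify where the $m$-fold Thomassen induction is tight (every inequality in the deletion step of $v_p$ is an equality, so interior lists of size $5m-1$ leave a promoted vertex with $3m-1$, one colour short), and you propose a relaxed invariant carrying a bounded total deficiency repaid from boundary slack. But the entire mathematical content of the conjecture is hidden in the step you defer: showing that the total deficiency can be bounded \emph{independently of the graph}. Nothing in the proposal supplies a mechanism for this, and the difficulty is worse than ``tight configurations are sparse'': with interior lists of size $5m-1$, \emph{every} promotion generates a deficit unless the adaptive choice of $C_1,C_2$ happens to miss the promoted neighbour's list, and an adversarial list assignment (the only kind that matters for choosability) can be designed against whatever adaptive rule you fix. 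A discharging argument needs a local structure forcing slack near each deficit, and no such structure is identified; the fallback via $ch_f\le 4$ plus ``Thomassen-style rounding'' is equally unsubstantiated, since the integer $m_G$ coming from Alon--Tuza--Voigt is not uniform in $G$ --- which is exactly the non-uniformity the conjecture asks to overcome. In short: the problem analysis is accurate and the obstacle is correctly located, but the conjecture remains exactly as open after your proposal as before it.
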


It is proved recently in \cite{HZ2016} that every planar graph is   $1$-defective
$(9,2)$-paintable, which implies that every planar graph is $1$-defective $(9,2)$-choosable (i.e., if each vertex has $9$ permissible colours, then there is a
$2$-fold colouring of the vertices of $G$ with permissible colours so that each colour class induces a graph of maximum degree at most $1$.) 
The following conjecture, which is stronger than Conjecture \ref{conj1}, asserts that the $1$-defective can be replaced by $0$-defective.

\begin{conjecture}
	\label{conj2}
	Every planar graph is $(9,2)$-choosable.
\end{conjecture}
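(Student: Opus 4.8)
The plan is to deduce Conjecture~\ref{conj2} from a Thomassen-type strengthening adapted to $2$-fold colouring. Since $(9,2)$-choosability passes to subgraphs, it suffices to treat edge-maximal planar graphs, i.e. triangulations (triangulate each face of the given graph on the same vertex set). For such a graph I would fix one facial edge $v_1v_2$, choose disjoint $2$-subsets $\phi(v_1)\subseteq L(v_1)$ and $\phi(v_2)\subseteq L(v_2)$ (possible as each list has $9\ge 2+2$ colours), and extend. The target is the following boundary-weighted statement, to be proved by induction on the number of vertices: if $G$ is a near-triangulation with outer cycle $v_1v_2\cdots v_p$, the two adjacent boundary vertices $v_1,v_2$ carry prescribed disjoint $2$-sets, every other boundary vertex has $|L(v_i)|\ge 6$, and every interior vertex has $|L(u)|\ge 9$, then the prescribed partial colouring extends to a $2$-fold $L$-colouring of $G$. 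Conjecture~\ref{conj2} is the case $p=3$.

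For the induction I would run the two usual cases. In the chord case a chord $v_iv_j$ splits $G$ into two near-triangulations sharing the edge $v_iv_j$; I would colour the side containing $v_1v_2$ first, and the colours landing on $v_i,v_j$ then serve as the prescribed disjoint $2$-sets for the second side, so both pieces satisfy the hypotheses and induction applies. In the chordless case I would look at $v_p$ (adjacent to the precoloured $v_1$), whose remaining neighbours $w_1,\dots,w_t$ are interior and form a fan with $w_t$ adjacent to $v_{p-1}$. The naive move is to reserve a set $P\subseteq L(v_p)\setminus\phi(v_1)$, delete $P$ from each $w_i$, colour $G-v_p$ by induction, and finally pick $\psi(v_p)\subseteq P$ avoiding the two colours of $\phi(v_{p-1})$. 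This forces $|P|\ge 4$, hence $|L(v_p)|\ge 6$, and after deleting $P$ the fan vertices keep only $9-4=5$ colours, one short of the $6$ demanded of new boundary vertices. Carrying this through with interior lists of size $10$ instead reproduces exactly the known bound $a(m)\le 5m$, i.e. $(10,2)$-choosability.

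The main obstacle is precisely this loss of one colour in the fan: the four-colour reservation is dictated by the single lookahead vertex $v_{p-1}$, yet it is charged against every fan vertex. To reach $9$ I would instead reserve only a $2$-set $Q\subseteq L(v_p)\setminus\phi(v_1)$, delete $Q$ from the fan vertices (which keeps $9-2=7\ge 6$) and also from $v_{p-1}$, and then simply take $\psi(v_p)=Q$: the induction forces every $w_i$ and $v_{p-1}$ off $Q$, so this is legal. The price is that $v_{p-1}$ drops to a list of size $4$, so the invariant must permit one distinguished boundary vertex of size $4$. The difficulty, and the real content of the conjecture, is to control this deficiency: one must prevent the weakened vertex from ever being the fan-neighbour of the next removed vertex (where it would be asked to shed two more colours and collapse), propagate it correctly across the chord split, and seed the base configuration $p=3$ consistently.

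As an independent line of attack I would try to upgrade the $1$-defective $(9,2)$-colouring of \cite{HZ2016} to a proper one by recolouring: there each colour class induces a matching, and the goal is to reroute colours along alternating structures so as to break every improper edge without creating new conflicts; the obstacle is the global, possibly cyclic, dependence of these local swaps. Either way the consistency check is encouraging: for $m=2$, Theorem~\ref{main} produces a planar graph that is not $(8,2)$-choosable, so $a(2)\ge 9$, and Conjecture~\ref{conj2} would pin down $a(2)=9$.
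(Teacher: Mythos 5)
The statement you are addressing is Conjecture~\ref{conj2} of the paper, and the paper does \emph{not} prove it: it is posed as an open problem, supported only by the neighbouring facts that every planar graph is $1$-defective $(9,2)$-paintable \cite{HZ2016} and $(9,2)$-colourable \cite{CR2016}. So there is no proof in the paper to compare against; the only question is whether your argument settles the conjecture, and by your own accounting it does not. The Thomassen-type invariant you set up (two adjacent precoloured boundary vertices carrying disjoint $2$-sets, boundary lists of size $6$, interior lists of size $9$) survives the chord case but breaks in the chordless case, exactly where you say it does. Reserving a $4$-set $P$ at $v_p$ charges four colours against every fan vertex and therefore needs interior lists of size $10$; this merely reproves $(10,2)$-choosability, i.e.\ the known bound $a(m)\le 5m$. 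Reserving a $2$-set $Q$ and committing $\psi(v_p)=Q$ transfers the damage to $v_{p-1}$, whose list drops to $4$, below the boundary threshold. The patched invariant ``at most one deficient boundary vertex of size $4$'' is not self-sustaining: \emph{every} chordless step manufactures a new deficient vertex (the new lookahead vertex $v_{p-1}$), so deficiencies accumulate along the boundary instead of staying unique; and when an already deficient vertex is hit a second time --- which is unavoidable, e.g.\ when the removed vertex has a single interior fan neighbour, so the old deficient vertex is again the lookahead --- its list drops to $2$, making it effectively a third precoloured vertex, precisely the configuration that Thomassen-style induction cannot tolerate (two nonadjacent precoloured pairs on the outer face can be inextendable even with large lists elsewhere). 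Naming this obstacle, as you do, is not the same as overcoming it; the concrete missing idea is an inductive invariant that absorbs or discharges the deficiency, and nothing in the proposal supplies one.

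Your second line of attack --- repairing the $1$-defective $(9,2)$-colouring of \cite{HZ2016} by rerouting colours along alternating structures --- is likewise a direction rather than an argument: you identify the obstruction (globally interlocking, possibly cyclic swaps) without a mechanism to resolve it. The closing consistency check is correct as far as it goes: Theorem~\ref{main} with $m=2$ gives a planar graph that is not $(8,2)$-choosable, so $a(2)\ge 9$, and Conjecture~\ref{conj2} would pin down $a(2)=9$; but that is evidence of sharpness, not progress toward a proof. The statement remains open both in the paper and after your proposal.
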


It follows from the Four Colour Theorem  that for any integer $m$, every planar graph is $(4m,m)$-colourable. Without using the Four Colour Theorem, it is proved very recently by Cranston and Rabern  \cite{CR2016}  that every planar graph is $(9,2)$-colourable (an earlier result in \cite{HRS1973} shows that every planar graph $G$ is $(5m-1,m)$-colourable with $m = |V(G)|+1$).

\bibliographystyle{unsrt}

\begin{thebibliography}{99}
	
\bibitem{ATV1995}	N. Alon, Zs. Tuza, and M. Voigt, {\em Choosability and fractional chromatic numbers}, Discrete Math.,	165/166:31--38, 1997. Graphs and combinatorics (Marseille, 1995).
	
	\bibitem{CR2016} D. Cranston and L. Rabern, {Planar graphs are $9/2$-colorable}, manuscript, 2015.
	
\bibitem{Gut1996} S.	Gutner,  
{\em	The complexity of planar graph choosability},
	Discrete Math. 159 (1996), no. 1-3, 119--130. 
	
	\bibitem{HZ2016} M. Han and X. Zhu, {\em Every planar graph is $1$-defective $(9,2)$-paintable}, manuscript, 2016.
	
\bibitem{HRS1973}	A.J.W. Hilton, R.~Rado, and  S.H. Scott, {\em   A $(<5)$-colour theorem for planar graphs},
  Bull. London Math. Soc., 5:302--306, 1973.
	
	
	\bibitem{Mir1996} M. Mirzakhani,{\em  
		A small non-4-choosable planar graph},
	Bull. Inst. Combin. Appl. 17 (1996), 15--18. 

 
\bibitem{Tho1994} C. Thomassen, {\em Every planar graph is $5$-choosable}, J. Combin. Theory B, 64(1994),101--107.   

\bibitem{Voigt1993} M. Voigt {\em List colourings of planar graphs}, Disc. Math., 120(1993), 215--219. 

\bibitem{VW1997} M.  Voigt and B. Wirth, {\em  On 3-colorable non-4-choosable planar graphs}, J. Graph Theory 24 (1997), no. 3, 233--235.

\end{thebibliography}

\end{document}